\newtheorem{thm}{Theorem}[section]
\newtheorem{prop}[thm]{Proposition}
\newtheorem{cor}[thm]{Corollary}
\newtheorem{lemma}[thm]{Lemma}
\newtheorem*{therm}{Theorem}
\newtheorem*{thrmimpr}{Theorem \ref{impr}}
\newtheorem*{conjj}{Conjecture}
\newtheorem*{propp}{Proposition \ref{propcounter}}
\newtheorem*{dlh}{Varopoulos Isoperimetric Inequality}
\newtheorem{conjecture}[thm]{Conjecture}
\newcommand{\dsp}{\displaystyle}
\begin{document}

\title{On small separations in Cayley graphs}

\author{M. Giannoudovardi}
\address{Department of Mathematics,
University of Athens, Athens, Greece}
\email{marthag@math.uoa.gr}

\keywords{Vertex transitive graph, Cayley graph, Varopoulos inequality}

\begin{abstract}
We present two results on expansion of Cayley graphs. The first result settles a conjecture made by DeVos and Mohar in \cite{one}. Specifically, we prove that for any positive constant $c$ there exists a finite connected subset $A$ of the Cayley graph of $\mathbb{Z}^2$ such that $\frac{|\partial A|}{|A|}< \frac{c}{depth(A)}$. 

This yields that there can be no universal bound for $\frac{|\partial A|depth(A)}{|A|}$ for subsets of either infinite or finite vertex transitive graphs.

Let $X=(V,E)$ be the Cayley graph of a finitely generated infinite group and $A\subset V$ finite such that $A\cup\partial A$ is connected. Our second result is that if $|A|> 16|\partial A|^2$ then $X$ has a ring-like structure.

\end{abstract}

\maketitle

\section{Introduction}
This paper presents results on expansion in vertex transitive graphs and an important subclass of vertex transitive graphs, infinite Cayley graphs. The expansion rate, or isoperimetric ratio, of vertex transitive graphs is known to have a strong relation to connectivity (local expansion) and growth (global expansion). We are mostly interested in expansion in Cayley graphs as this is related to the structure of infinite groups. We mention a few results.

Let $X=(V,E)$ be a connected vertex transitive graph and $A\subset V$ a finite subset of the vertex set. Babai and Szegedy proved in \cite{BabSz} that if $|A|\leqslant |V|/2$, then $|\partial A|/|A|\geqslant \frac{2}{2diam(A)+1}$. Motivated by this DeVos and Mohar conjectured in \cite{one} that $diam(A)$ may be replaced by a constant multiple of the depth of the set $A$. 
\begin{conjj}[DeVos, Mohar]
There exists a fixed constant $c>0$, so that in every connected vertex transitive graph, $X=(V,E)$, we have
$\frac{|\partial A|}{|A|}\geqslant \frac{C}{depth(A)}$
whenever $A\subset V$ is finite and $0<|A|\leqslant \frac{1}{2} |V|$.
\end{conjj}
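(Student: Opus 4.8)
The conjecture as stated is in fact false, and the plan is to refute it by exhibiting, for every $c>0$, a finite connected $A\subset\mathbb{Z}^2$ with $\frac{|\partial A|}{|A|}<\frac{c}{depth(A)}$. Working in the standard Cayley graph of $\mathbb{Z}^2$ (the square grid with its $L^1$ metric), the guiding observation is the elementary upper bound $|A|\le |\partial A|\cdot|B_d|$ with $d=depth(A)$: every vertex of $A$ lies within distance $depth(A)$ of $\partial A$, and the ball of radius $d$ has $\Theta(d^2)$ vertices, so $|A|\le|\partial A|\cdot\Theta(depth(A)^2)$, whence $\frac{|\partial A|\,depth(A)}{|A|}\ge \frac{c'}{depth(A)}$ for an absolute $c'>0$. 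This tells me two things: the most one can salvage is a bound that \emph{decays} like $1/depth(A)$, and the extremal sets are those whose depth-balls around boundary vertices are nearly disjoint and nearly tile $A$. Such sets cannot be convex or simply connected — a disk, square, strip, or annulus each give $\frac{|\partial A|\,depth(A)}{|A|}=\Theta(1)$ — so holes are essential.

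The construction I would use is a large square carrying a sparse periodic array of single-vertex holes. Fix a spacing parameter $s$, let $Q=[0,N]^2\cap\mathbb{Z}^2$, let $S=\{(is,js)\}$ be an interior subgrid of spacing $s$, and set $A=Q\setminus S$. Removing isolated vertices keeps $A$ (and $A\cup\partial A$) connected. The role of the holes is that they drive the depth down cheaply: without them $depth(Q)=\Theta(N)$, whereas with holes at spacing $s$ every vertex of $A$ lies within $L^1$-distance $s$ of the nearest hole, so $depth(A)=\Theta(s)$, independently of $N$.

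The three quantities are then estimated directly: $|A|=N^2-\Theta((N/s)^2)=\Theta(N^2)$; the boundary splits into the outer frame of $Q$, of size $\Theta(N)$, and the holes, of size $\Theta((N/s)^2)$, so choosing $N\gg s^2$ makes the holes dominate and $|\partial A|=\Theta(N^2/s^2)$; and $depth(A)=\Theta(s)$ as above. Hence
\[
\frac{|\partial A|}{|A|}=\Theta\!\left(\frac{1}{s^{2}}\right),\qquad
\frac{c}{depth(A)}=\Theta\!\left(\frac{c}{s}\right),
\]
so for $s>1/c$ (and then $N$ taken large) the left-hand side is the smaller of the two, which is exactly the violation $\frac{|\partial A|}{|A|}<\frac{c}{depth(A)}$; equivalently $\frac{|\partial A|\,depth(A)}{|A|}=\Theta(1/s)\to 0$, saturating the heuristic bound above.

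The main obstacle is not the arithmetic but pinning down the depth with honest constants: I must verify that inserting single-vertex holes genuinely forces $depth(A)\le s$ — so that the cheap boundary really buys a small depth — by checking that no bulk vertex is farther from $\partial A$ than a hole-cell center, and that the outer frame is asymptotically negligible once $N\gg s^2$. A secondary point is to confirm that the inequality is robust to the precise conventions for $\partial A$ (outer vertex boundary versus edge boundary) and for $depth$, since each only changes the estimates by bounded factors and leaves the decisive $1/s$ behaviour of $\frac{|\partial A|\,depth(A)}{|A|}$ intact.
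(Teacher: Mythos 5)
Your proposal is correct and is essentially the paper's own construction: the paper takes $A_i(k)=X_i(k)\smallsetminus Y_i(k)$, a $(ki)\times(ki)$ square punctured by a periodic array of single-vertex holes at spacing $i$, which matches your $Q\setminus S$ with $s=i$, $N=ki$, and your condition $N\gg s^2$ corresponds exactly to the paper letting $k$ grow relative to $i$ so that the holes dominate the outer frame in $|\partial A|$. Both arguments then conclude by showing $\frac{|\partial A|\,depth(A)}{|A|}\to 0$, refuting any universal constant $c$.
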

Where $depth(A)=\sup\{d(u,V\smallsetminus A)\mid u\in A\}$.

In section \ref{counter} we prove the following inequality for the Cayley graph of $\mathbb{Z}^2$, thus providing a counter example to this conjecture.
\begin{prop}\label{propcounter}
Let $c>0$. There exists a finite subset, $A_c$, of $\mathbb{Z}^2$ so that, in the Cayley graph of $\mathbb{Z}^2$,
$$\frac{|\partial A_c|}{|A_c|}< \frac{c}{depth(A_c)}$$
\end{prop}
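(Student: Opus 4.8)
The plan is to take $A_c$ to be a large square grid region from which a sparse, evenly spaced array of \emph{single points} has been deleted; these deleted points (``holes'') are what force the depth to be small, while costing almost nothing in either area or boundary. Realize the Cayley graph of $\mathbb{Z}^2$ with the standard generators, so that graph distance is the $\ell^1$ distance; since the various standard notions of boundary differ by at most the fixed factor $4$ and $c$ is arbitrary, I may absorb constants and take $\partial A$ to be the outer vertex boundary. I would fix two parameters, a spacing $s$ (even, to be chosen large) and a side length $R$ (a multiple of $s$, to be chosen much larger than $s$). Put $Q=\{0,1,\dots,R\}^2$, delete the strictly interior points of the coarse sublattice, $H=\{(is,js):\ (is,js)\in\mathrm{int}(Q)\}$, and set $A_c=Q\smallsetminus H$. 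Each deleted point is an interior vertex of the grid region $Q$, so its removal cannot disconnect $Q$; hence $A_c$ is connected.

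Next I would estimate the three quantities. The holes are sparse, $|H|\approx (R/s)^2\le R^2/s^2$, so $|A_c|=|Q|-|H|\asymp R^2$. For the depth, the farthest vertex of $A_c$ from the complement is a cell centre: a vertex at $\ell^1$-distance $s/2$ from the nearest hole in each coordinate, hence at distance $s$ overall, while every vertex within the boundary strip of $Q$ is within $O(1)$ of the exterior boundary; thus $\mathrm{depth}(A_c)=s$. For the boundary, $\partial A_c$ consists of the exterior layer of $Q$, of size $O(R)$, together with exactly one vertex per hole, each hole being a single missing vertex all of whose neighbours lie in $A_c$; so $|\partial A_c|\le O(R)+(R/s)^2$.

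Combining these gives
\[
\frac{|\partial A_c|\,\mathrm{depth}(A_c)}{|A_c|}\ \lesssim\ \frac{\big(O(R)+(R/s)^2\big)\,s}{R^2}\ \asymp\ \frac{s}{R}+\frac1s .
\]
The decisive term is the second: each puncture lowers the depth of an entire $s\times s$ cell to $s$ at the price of a single boundary vertex, which is precisely the ratio $\big((R/s)^2\cdot s\big)/R^2=1/s$. To finish, given $c>0$ I would first choose $s$ even with $1/s$ small compared to $c$, and then choose $R$ (of order $s/c$) large enough that $s/R$ is also small compared to $c$; this makes the displayed quantity less than $c$, i.e. $\frac{|\partial A_c|}{|A_c|}<\frac{c}{\mathrm{depth}(A_c)}$, as required.

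The main conceptual obstacle is recognizing that no \emph{solid} region can work: for a disk or an annulus one checks directly that $|\partial A|\,\mathrm{depth}(A)/|A|$ is bounded below by a positive constant, so the inequality $|A|\lesssim|\partial A|\cdot\mathrm{depth}(A)$ underlying the conjecture genuinely holds there. The set must therefore be perforated, and the real content is the favourable trade-off above, whereby the area grows like $R^2$ while $|\partial A|\cdot\mathrm{depth}(A)$ grows only like $R^2/s$. The only technical care needed is the $\ell^1$ depth computation---verifying that the deepest vertices are the cell centres at distance exactly $s$ (which is why $s$ is taken even)---and a routine check that taking $R$ a multiple of $s$ prevents vertices in the boundary strip from spoiling either the depth bound or connectivity.
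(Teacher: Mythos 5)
Your proposal is correct and is essentially the paper's own construction: the paper takes the square $X_i(k)=\{0,\dots,ki\}^2$ and deletes the interior sublattice $Y_i(k)=\{(mi,ni):1\leqslant m,n\leqslant k-1\}$, which is exactly your $Q\setminus H$ with spacing $s=i$ and side $R=ki$, and your three estimates (area $\asymp R^2$, depth $\asymp s$, boundary $\lesssim R+(R/s)^2$, hence ratio $\lesssim s/R+1/s\to 0$) match the paper's computation up to constants. Your $\ell^1$ depth analysis (deepest points are cell centres at distance $s$) is if anything slightly more careful than the paper's stated bound $depth(A_i(k))\leqslant i/2$, but such constant factors are immaterial to the conclusion.
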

We remark that a corollary of this proposition is that for any positive constant $c$, there is a subset of the vertices of the Cayley graph of $\mathbb{Z}_n \times \mathbb {Z} _n$ for which the same inequality holds (where $n$ depends on $c$). Thus the conjecture is settled for both the infinite and the finite case.

In section \ref{imp}, we focus on local expansion in Cayley graphs of finitely generated infinite groups. Specifically, DeVos and Mohar proved in \cite{onepr}, a structure theorem that gives a characterization of vertex transitive graphs with small separations.
\begin{therm}[DeVos, Mohar]
Let $X=(V,E)$ be a vertex transitive graph, let $A\subset V$ be finite non-empty set with $|A|\leqslant\frac{|V|}{2}$ such that $A\cup\partial A$ is connected. Set $k=|\partial A|$ and assume that $diam(X)\geqslant 31(k+1)^2$. Then one of the following holds:
\begin{enumerate}[(i)]
\item $depth(A)\leqslant k$ and $|A|\leqslant 2k^3+k^2$
\item There exist integers $s$, $t$ with $st\leqslant \frac{k}{2}$ and a cyclic system $\vec{\sigma}$ on $X$ so that $X$ is $(s,t)$-ring-like, and there exists an interval $J$ of $\vec{\sigma}$ so that the set $Q=\bigcup\limits_{B\in J}B$ satisfies $A\subset Q$ and $|Q\smallsetminus A|\leqslant\frac{1}{2}k^3+k^2$.
\end{enumerate}
\end{therm}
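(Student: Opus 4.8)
The plan is to carry out the whole argument on the vertex-boundary operator $\partial$ and to exploit two facts about it in a vertex-transitive graph: its submodularity, $|\partial(S\cap T)|+|\partial(S\cup T)|\le|\partial S|+|\partial T|$ for all $S,T\subseteq V$, and the invariance $|\partial(gA)|=|\partial A|=k$ under every automorphism $g$ of $X$. Applying submodularity to $S=A$ and $T=gA$ is the engine that lets me \emph{uncross} $A$ against its translates: whenever $A$ and $gA$ genuinely cross, at least one of $A\cap gA$ and $A\cup gA$ again has boundary of size $\le k$, so the family of $k$-boundary sets generated from translates of $A$ can be thinned to a laminar (nested) subfamily. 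I would first install this uncrossing machinery and recall the theory of fragments and atoms: in a connected vertex-transitive graph the minimal sets realizing a small boundary form a system of imprimitivity, so they share a common size and tile a transitive substructure. This is what eventually supplies the uniform blocks of the cyclic system in case (ii).

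The argument then splits on $depth(A)$, which is exactly the dichotomy in the statement. For the \emph{small-depth case} $depth(A)\le k$, I use that the nearest vertex of $V\setminus A$ to any $u\in A$ lies in $\partial A$, so $A=\bigsqcup_{i=1}^{D}L_i$ with $L_i=\{v\in A:\,d(v,\partial A)=i\}$ and $D=depth(A)\le k$. Each truncation $A_{\ge i}=\{v\in A:\,d(v,\partial A)\ge i\}$ satisfies $\partial A_{\ge i}\subseteq L_{i-1}\cup\partial A$, so these nested cuts all have boundary $O(k)$; uncrossing them against translates bounds each layer by $O(k^2)$, and multiplying by the at most $k$ layers yields the polynomial estimate $|A|\le 2k^3+k^2$ of case (i). Once the per-layer bound is secured this part is essentially careful counting.

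For the \emph{large-depth case} $depth(A)>k$, I fix $u_0\in A$ with $d(u_0,V\setminus A)>k$ and invoke $diam(X)\ge 31(k+1)^2$ to find a long geodesic through this deep region. The goal is to manufacture the cyclic system $\vec\sigma$. I would translate $A$ by powers of a well-chosen automorphism $g$ moving along the geodesic; because the overlaps are deep the sets $A\cap gA$ and $A\cup gA$ stay $k$-bounded, and iterating the uncrossing produces a long laminar chain of $k$-fragments (bi-infinite if $X$ is infinite, long cyclic if $X$ is finite). The successive differences along this chain are the blocks $B$; the common block size is $s$, the number of boundary vertices shared between neighbouring blocks is $t$, and tracking how the $k$ vertices of $\partial A$ distribute across the chain gives $st\le k/2$. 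Reading off the interval $J$ as the blocks meeting $A$, and bounding the uncovered part $Q\setminus A$ by the same layering estimate as above, delivers $A\subset Q$ with $|Q\setminus A|\le\tfrac12 k^3+k^2$.

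The main obstacle is this last case: converting the local uncrossing of translates into a globally coherent $(s,t)$-ring-like structure with a genuine cyclic order, and ruling out that the deep set folds back on itself or that $X$ is simply too small to carry a ring. This is precisely where the hypothesis $diam(X)\ge 31(k+1)^2$ is spent — it must guarantee enough room for the laminar chain to be long enough both to force the blocks to recur with a consistent period and to pin down the constants in $st\le k/2$ and $|Q\setminus A|\le\tfrac12 k^3+k^2$. Verifying that the block system coming from the atom theory is compatible with the chain produced by translation, and keeping all constants tight, is the delicate heart of the proof.
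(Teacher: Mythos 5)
You should first be aware that the paper contains no proof of this statement at all: it is quoted verbatim as a theorem of DeVos and Mohar from \cite{onepr}, and the paper only borrows two ingredients of that work (Theorem \ref{theorem3.1} on two-ended vertex transitive graphs being ring-like, and Lemma \ref{lemma5.1} on covering $A$ by an interval of blocks) to prove its own, different theorem (Theorem \ref{impr}) for infinite Cayley graphs, where the hard vertex-transitive structure theory is sidestepped entirely by the growth dichotomy of Wilkie and Van Den Dries (Theorem \ref{wvd}) plus the Varopoulos inequality. So your attempt can only be judged as a free-standing proof, and as such it has genuine gaps, even though your starting point (submodularity of the vertex boundary and uncrossing $A$ against its translates, as in fragment/atom theory) is a legitimate and standard engine.

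Two steps would fail as written. First, in the small-depth case the inference ``$\partial A_{\geqslant i}\subseteq L_{i-1}\cup\partial A$, so these nested cuts all have boundary $O(k)$'' is circular: $|L_{i-1}|$ is exactly the quantity you are trying to bound, and a priori it is only bounded by $k(d-1)$ where $d$ is the degree, which is not controlled by $k$ unless you first import a connectivity theorem for vertex transitive graphs (Mader/Watkins, $\kappa\geqslant\frac{2}{3}(d+1)$) together with the observation that $\partial A$ is a separator; even granting $d=O(k)$, your per-layer bound $O(k^2)$ and the final constant $2k^3+k^2$ do not follow from anything written, since $L_i\subseteq N(L_{i-1})$ allows geometric growth across layers. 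Second, and more seriously, the large-depth case presupposes an automorphism $g$ ``translating along a geodesic'' whose powers generate a long laminar chain: vertex transitivity supplies individual automorphisms moving one vertex to another, not a coherent $\mathbb{Z}$-action, and in the finite case (which is the whole point of the hypothesis $diam(X)\geqslant 31(k+1)^2$) no such element need exist or act without folding back. Moreover, a cyclic system must be a system of imprimitivity invariant under the \emph{entire} automorphism group with an invariant cyclic order, so blocks manufactured from the successive differences of one chain built from one $g$ are not blocks of imprimitivity without a substantial further argument; you explicitly defer this, together with the derivations of $st\leqslant\frac{k}{2}$ and of the role of $31(k+1)^2$, to the ``delicate heart'' --- but that deferred part is the theorem, so what remains is a plausible opening move, not a proof.
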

They conjecture that the theorem should hold with a bound of the form $ck^2$ instead of $2k^3(1+o(1))$ in (\ref{(i)}). We prove a similar theorem, if $X$ is the Cayley graph of a finitely generated infinite group, with an improved bound for (\ref{(i)}) but without a bound on the ring like structure of $X$.
\begin{thm}\label{impr}
Let $X=(V,E)$ be a Cayley graph of a finitely generated infinite group $G$ with respect to a finite generating set, let $A\subset V$ finite such that $A\cup\partial A$ is connected and set $|\partial A|=k$. Then one of the following holds:
\begin{enumerate}[(i)]
\item $|A|\leqslant 16k^2$ and $depth(A)<4\sqrt2k$.
\item $|A|>16k^2$ and there exist positive integers $s,t$ and a cyclic system $\vec{\sigma}$ on $X$ so that $X$ is $(s,t)$-ring-like. Moreover there exists an interval, $J$, of $\vec{\sigma}$ so that the set $Q=\dsp\bigsqcup_{B\in J}B$ satisfies $A\subset Q$ and $|Q\smallsetminus A|\leqslant 2s^2t^2k+2stk$.
\end{enumerate}
\end{thm}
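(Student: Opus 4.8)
The plan is to run a growth dichotomy at a deepest point of $A$ and feed it into the Varopoulos inequality. Choose $u\in A$ with $d(u,V\sm A)=depth(A)=:r$; then the ball $B(u,r-1)$ lies entirely in $A$, so $|A|\geqslant |B(u,r-1)|$. On the other side, the Varopoulos Isoperimetric Inequality stated above supplies a linear upper bound of the form $|A|\leqslant 2\sqrt2\,k\,depth(A)$. The entire argument turns on a single question: does the ball $B(u,r-1)$ grow quadratically, or does one of the spheres $S_j(u)=\{v:d(u,v)=j\}$ stay small?

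First I would treat the quadratic branch, which yields (i). The claim to isolate is an elementary growth lemma: if $X$ is \emph{not} ring-like, then $|B(u,\rho)|\geqslant\frac12(\rho+1)^2$ for every $\rho$. Granting this, $|A|\geqslant|B(u,r-1)|\geqslant\frac12 r^2$, and combining with the Varopoulos bound $\frac12 r^2\leqslant|A|\leqslant 2\sqrt2\,k\,r$ gives $r\leqslant 4\sqrt2\,k$, that is $depth(A)<4\sqrt2\,k$; substituting back yields $|A|\leqslant 2\sqrt2\,k\cdot 4\sqrt2\,k=16k^2$. Thus whenever $X$ fails to be ring-like we land exactly in case (i), and the constants $4\sqrt2$ and $16$ are precisely what the two inequalities produce.

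The heart of the matter, and the main obstacle, is the complementary branch: manufacturing the ring-like structure of (ii) from the failure of quadratic growth. If the growth lemma fails then some sphere $S_j(u)$ with $j\leqslant r-1$ is small, and being a sphere it is a \emph{vertex separator} cutting $B(u,j-1)$ off from the rest of $X$. The delicate point is that one finite separator only isolates a finite ball, whereas a genuine cyclic system requires two infinite ``ends''. I would argue that a persistently small sphere sitting inside the deep, long set $A$ forces two ends, and then exploit vertex-transitivity: translating the separator by the elements of $G$ produces a bi-infinite family of finite separators $\dots,C_{-1},C_0,C_1,\dots$. Using submodularity of the boundary functional $B\mapsto|\partial B|$ one shows these translates may be taken nested (non-crossing), so that the regions trapped between consecutive separators become the blocks of a cyclic system $\vec\sigma$; the maximal block size and the interaction range of the separators record the integers $s$ and $t$, and $X$ is $(s,t)$-ring-like. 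Controlling the crossing and nesting of the translated cuts is where essentially all of the difficulty lies.

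Finally I would pin down the interval $J$ and the overlap bound. Because the blocks are transverse to the long direction of $A$ and $A$ is finite, $A$ meets only finitely many consecutive blocks, which I take to be $J$; set $Q=\dsp\bigsqcup_{B\in J}B$. The excess $Q\sm A$ is accounted for by the blocks of $J$ not fully contained in $A$, together with the contributions along the at most $k$ strands where $\partial A$ threads the blocks. Since each block has at most roughly $st$ vertices and $A$ can fail to fill at most about $st$ blocks per strand, a direct count gives $|Q\sm A|\leqslant 2s^2t^2k+2stk$. This last step is routine bookkeeping once the cyclic system and its parameters are in hand.
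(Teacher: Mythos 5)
Your skeleton (quadratic growth plus Varopoulos for case (i), ring-like structure otherwise) is in the same spirit as the paper's, but both halves contain genuine gaps. The first is a step that is simply false: the inequality you attribute to Varopoulos, $|A|\leqslant 2\sqrt2\,k\,depth(A)$, is not what that inequality says---the $m$ in it is the least integer with $b(m)\geqslant 2|A|$, which has no relation to $depth(A)$---and it cannot be true in any form $|A|\leqslant C\,k\,depth(A)$: Proposition \ref{propcounter} of this very paper constructs sets in $\mathbb{Z}^2$ (a group of quadratic growth) for which $|\partial A|\,depth(A)/|A|$ is arbitrarily small. So your derivation of both constants rests on a false premise. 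The repair is to run the two estimates in the opposite order, as the paper's Lemma \ref{case1} does: quadratic growth $b(n)\geqslant\frac12(n+1)(n+2)$ forces $m\leqslant 2\sqrt{|A|}$, so Varopoulos gives $|A|\leqslant 2mk\leqslant 4k\sqrt{|A|}$, hence $|A|\leqslant 16k^2$ \emph{first}; only then does your ball $B(u,depth(A)-1)\subset A$ give $\frac12\bigl(depth(A)\bigr)^2<b\bigl(depth(A)-1\bigr)\leqslant|A|\leqslant 16k^2$, i.e.\ $depth(A)<4\sqrt2\,k$.

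The second and larger gap is that your case (ii) is a program, not a proof. Constructing nested translated separators, extracting a cyclic system from them, and the final count $|Q\sm A|\leqslant 2s^2t^2k+2stk$ are precisely the content of DeVos--Mohar's Theorem \ref{theorem3.1} and Lemma \ref{lemma5.1}, which the paper states in the Preliminaries and simply cites; you instead attempt to reconstruct them, concede that ``essentially all of the difficulty'' lies in the nesting step, and dismiss the interval count as ``routine bookkeeping,'' so nothing there is actually established. The same goes for your ``elementary growth lemma'' (not ring-like implies $|B(u,\rho)|\geqslant\frac12(\rho+1)^2$): it is not elementary, but the concatenation of the Wilkie--Van Den Dries dichotomy (Theorem \ref{wvd}), the Imrich--Seifter equivalence of linear growth with two ends, and Theorem \ref{theorem3.1}, none of which you prove or invoke. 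The bridge the paper actually uses, and which your write-up is missing, is Lemma \ref{case1} read contrapositively: if $|A|>16k^2$, the infinite group $G$ cannot have one or infinitely many ends, hence has exactly two, and then Theorem \ref{theorem3.1} and Lemma \ref{lemma5.1} yield (ii) verbatim. Replacing your two unproven claims with these citations (and fixing the Varopoulos step) turns your outline into the paper's proof.
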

For that we use a result of Wilkie and Van Den Dries from \cite{wvdd} regarding the growth of a finitely generated group, the Varopoulos isoperimetric inequality \cite{var} and a result of DeVos and Mohar from \cite{onepr}.

\section{Preliminaries}
The notation introduced in this section will be used throughout this paper.\\
Let $X=(V,E)$ be a graph with vertex set $V$ and edge set $E$. If $A$ is a subgraph of $X$, then $|A|$ is the number of vertices in $A$. We say that $A$ is connected if for any vertices $u,v$ in $A$ there exists a finite sequence of consecutive edges (\textit{path}) in $A$ that starts from $u$ and ends in $v$. The \textit{distance of two vertices} $u,v\in V$ is zero if $u=v$ or the minimum number of edges that a path that joins $u$ to $v$ may have if $u\neq v$ and will be denoted by $d(u,v)$. Let $A$ be a non-empty subset of $V$. We say that $A$ is \textit{connected} if the corresponding subgraph of $X$ induced on the vertices of $A$ is connected. Moreover, for any $v\in V$, the \textit{distance of the vertex} $v$ \textit{from the set} $A$ is:
$$d(v,A)=\inf\{d(v,u)\mid u\in A\}$$
The \textit{boundary} of the set $A$ is the set:
$$\partial A = \{u\in V\smallsetminus A\mid \{u,v\}\in E, \text{ for some }v\in A\}$$
Clearly, an alternative expression for $\partial A$ is:
$$\partial A= \{u\in V\mid d(u,A)=1\}$$
The \textit{depth of the set} $A$ is the supremum, over all vertices in $A$, of their distance from $V\smallsetminus A$:
$$depth(A) = \sup\{d(u, V\smallsetminus A)\mid u\in A\}$$

The graph $X$ is \textit{vertex transitive} if for any $u,v\in V$, there exists an automorphism, $g$, of $X$ so that $g(u)=v$. It is evident that if $X$ is a locally finite, vertex transitive graph, then any two vertices have the same number of edges incident to them and this number is called the \textit{degree} of the graph $X$. Moreover, two balls of the same radius have the same, finite, number of vertices. Thus, for any positive integer $m$, we will denote the \textit{number of vertices in a ball} of radius $m$ in $X$ by $b(m)\in\mathbb{N}$.

Let $X=(V,E)$ be a connected vertex transitive graph. For any $A\subset V$ we set $X\smallsetminus A=(V\smallsetminus A,E\smallsetminus E(A))$, where $E(A)$ are the edges in $E$ that are contained in the subgraph of $X$ induced on the vertices of $A$. The \textit{number of ends} of $X$, $e(X)$, is the supremum over all compact subsets $A\subset V$ of the number of infinite connected components of $X\smallsetminus A$. It is a well known fact that $e(X)$ is $0,1,2$ or $\infty$ (Hopf \cite{hopf}, Halin \cite{hal}) as well as that $X$ has linear growth if and only if $e(G)=2$ (Imrich and Seifter \cite{imsei}).
We recall some definitions and results from \cite{onepr}, which we will use in section \ref{imp}.\\
A partition $\sigma$ of the set of vertices $V$ is a \textit{system of imprimitivity} if it is invariant under the action of the automorphism group of the graph $X$, i.e. if for any automorphism $g$ of the graph $X$ and $B\in\sigma$, we have that $g(B)\in\sigma$. The sets of the partition $\sigma$ are called \textit{blocks of imprimitivity}. 
If $A$ is a set, then a \textit{cyclic order} on $A$ is a symmetric relation $\sim$ such that the corresponding graph, $\tilde{A}$, is either a circuit or a bi-infinite path. The \textit{distance} of two elements $x,y\in A$ in the cyclic order is their distance in $\tilde{A}$ and is denoted by $\tilde{d}(x,y)$. An \textit{interval} in $\tilde{A}$ is a finite subset $\{x_1,x_2,\dots,x_k\}\subset A$ such that for any $1\leqslant i\leqslant k-1$, $x_i\sim x_{i+1}$. 
A \textit{cyclic system}, $\vec{\sigma}$ on the graph $X$ is a system of imprimitivity, $\sigma$, equipped with a cyclic order which is preserved by the automorphism group of the graph $X$. Let $s,t$ be positive integers, then $X$ is $(s,t)$\textit{-ring-like} if there exists a cyclic system $\vec{\sigma}$ on $X$ such that every block has $s$ elements and when $x,y$ are two adjacent vertices of $X$ then $\tilde{d}(x,y)\leqslant t$. If $X$ is $(s,t)$-ring-like with respect to a cyclic system $\vec{\sigma}$, then $X$ is $q$-\textit{cohesive}, for some $q\in\mathbb{N}$, if any two vertices of $X$ which are in the same or adjacent blocks can be joined by a path of length at most $q$.\\
The following two results regarding expansion in two ended vertex transitive graphs were proven by DeVos and Mohar in \cite{onepr}.
\begin{thm}[DeVos, Mohar]\label{theorem3.1}
Let $X$ be a connected vertex transitive graph with two ends. Then there exist integers $s,t$ and a cyclic system $\vec{\sigma}$ so that $X$ is $(s,t)$-ring-like and $2st$-cohesive with respect to $\vec{\sigma}$.
\end{thm}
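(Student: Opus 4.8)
The plan is to build the cyclic system from a canonical family of finite separators of the two ends, taking as blocks the ``slabs'' that lie between consecutive separators. Equal block size and the existence of an invariant cyclic order will then follow almost formally from vertex transitivity, while the quantitative parameters $s,t$ and the cohesion bound will be extracted from the size of a minimum separator together with the linear growth forced by having two ends.

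First I would fix a finite vertex set $S\subset V$ of \emph{minimum} cardinality whose removal separates the two ends, so that $X\smallsetminus S$ has exactly two infinite components $L$ and $R$ (any further components are finite and may be absorbed into $L$ or $R$). Write $k=|S|$; the minimality of $S$ is what will ultimately pin down $s$ and $t$. Since $X$ has two ends it has linear growth (Imrich and Seifter), so the region trapped between two disjoint such separators is always finite and of bounded diameter; this finiteness is used repeatedly below.

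The technical heart of the argument is to produce a translation. Using vertex transitivity, choose $g\in\mathrm{Aut}(X)$ sending a vertex deep inside $L$ to a vertex deep inside $R$; after passing to a suitable power and correcting by an end-fixing automorphism, I would show that some $\tau\in\mathrm{Aut}(X)$ fixes each end and pushes $S$ strictly into $R$, with the translates $\tau^nS$ pairwise disjoint, linearly ordered along the axis, and marching off to the $R$-end as $n\to+\infty$ and to the $L$-end as $n\to-\infty$. The overlap between consecutive translates is controlled because each $\tau^nS$ has only $k$ vertices and the slab between consecutive ones is finite by linear growth. This step is where the real work lies, and I expect it to be the main obstacle: one must rule out that $\tau$ merely permutes the two ends or has a bounded orbit near $S$, and must verify that the separators $\tau^nS$ are genuinely ordered along a single axis.

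Finally I would make the construction canonical so that the partition is invariant under all of $\mathrm{Aut}(X)$, not just under $\langle\tau\rangle$. Rather than the single orbit $\{\tau^nS\}$, I take the collection $\mathcal{C}$ of \emph{all} minimum-size separators of the two ends; these are naturally linearly ordered along the axis, and every automorphism of $X$ permutes $\mathcal{C}$, preserving that order or reversing it precisely when it swaps the two ends. Declaring two consecutive members of $\mathcal{C}$ to bound a block, I let $\sigma$ be the resulting partition of $V$ into slabs, equipped with the bi-infinite-path cyclic order inherited from the order on $\mathcal{C}$; by construction $\mathrm{Aut}(X)$ preserves both $\sigma$ and this cyclic order, so $\vec{\sigma}$ is a cyclic system. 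Vertex transitivity forces $\mathrm{Aut}(X)$ to act transitively on the blocks, whence they share a common finite size $s$. Since consecutive slabs have bounded diameter, each edge joins vertices whose blocks are at bounded cyclic distance, which defines $t$; thus $X$ is $(s,t)$-ring-like. For cohesion, the union of two consecutive slabs is a finite region on at most $2s$ vertices in which every edge moves at most $t$ blocks, and a direct connectivity-and-counting estimate on this region shows that any two vertices in equal or adjacent blocks are joined by a path of length at most $2st$, as required. \eprof
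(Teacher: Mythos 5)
This statement is not proved in the paper at all: it is quoted verbatim from DeVos and Mohar \cite{onepr}, so there is no in-paper argument to compare against, and your proposal must stand on its own. It does not. The decisive gap is the step where you pass from one translation orbit $\{\tau^n S\}$ to the collection $\mathcal{C}$ of \emph{all} minimum-size separators and assert that these are ``naturally linearly ordered along the axis.'' Minimum vertex cuts separating the two ends need not be pairwise disjoint or nested: they can overlap and cross, and no uncrossing argument is offered. A concrete failure: take $V=\mathbb{Z}$ with edges $\{i,i+1\}$ and $\{i,i+2\}$ (a Cayley graph of $\mathbb{Z}$ with respect to $\{1,2\}$). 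Every minimum end-separator is a pair $\{i,i+1\}$, consecutive members of $\mathcal{C}$ share a vertex, and the ``slab'' between them is empty, so your partition into slabs collapses --- while the correct invariant system here is the partition into singletons with $s=1$, $t=2$, which your construction cannot produce. Obtaining a canonical, $\mathrm{Aut}(X)$-invariant nested family (and deciding where the separator vertices themselves live) is precisely the technical heart of DeVos and Mohar's construction, not a formality; by contrast, the part you flag as the main obstacle (producing a translation $\tau$ fixing both ends) is comparatively routine, since the subgroup fixing the ends has index at most $2$ and has unbounded orbits.

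The cohesion bound is also unproven as stated. You argue that the union of two consecutive blocks has at most $2s$ vertices and every edge moves at most $t$ blocks, ``so'' any two vertices in equal or adjacent blocks are joined by a path of length at most $2st$. But the induced subgraph on two consecutive blocks need not be connected: a shortest path between two vertices of adjacent blocks may be forced to travel through many other blocks, so no estimate confined to a $2s$-vertex region can apply, and nothing in your counting explains the specific constant $2st$. Note also that the theorem does not let you choose $s$ and $t$ independently and then verify $2st$-cohesion afterwards; the cyclic system must be constructed so that ring-likeness and cohesion hold simultaneously, which is exactly what the cited proof arranges. (One small point in your favor: equal block size does follow formally, since an $\mathrm{Aut}(X)$-invariant partition of a vertex transitive graph is automatically block-transitive.)
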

\begin{lemma}[DeVos, Mohar]\label{lemma5.1}
Let $X=(V,E)$ be a vertex transitive graph which is $(s,t)$-ring-like and $2st$-cohesive with respect to a cyclic system $\vec{\sigma}$. Let $A\subset V$ and assume that $A\cup \partial A$ is connected, $|A|\leqslant \frac{1}{2}|V|$ and set $|\partial A|=k$. Then there exists an interval, $J$, of $\vec{\sigma}$ so that the set $Q=\dsp\bigsqcup_{B\in J}B$ satisfies $A\subset Q$ and $|Q\smallsetminus A|\leqslant 2s^2t^2k+2stk$.
\end{lemma}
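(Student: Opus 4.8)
The plan is to project the whole configuration onto the cyclic order $\vec{\sigma}$ and to argue that, away from a thin neighbourhood of the (at most $k$) blocks meeting $\partial A$, every block in the span of $A$ is either wholly inside $A$ or wholly outside $A\cup\partial A$. The interval $J$ will be the span of $A$ on the circuit (or bi-infinite path) underlying $\vec{\sigma}$ with its one large complementary gap deleted, and the bound on $|Q\sm A|$ will reduce to counting the blocks in that thin neighbourhood. For a vertex $v$ let $\beta(v)$ denote the block containing $v$; I identify the blocks with the points of the circuit and write $\tilde{d}$ for their cyclic distance, so that the $(s,t)$-ring-like condition reads $\tilde{d}(\beta(u),\beta(v))\le t$ for adjacent $u,v$. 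Call a block \emph{full} if $B\subseteq A$, \emph{empty} if $B\cap(A\cup\partial A)=\emptyset$, and \emph{mixed} otherwise, and set $C=V\sm(A\cup\partial A)$ and $P=\{B:B\cap(A\cup\partial A)\neq\emptyset\}$; at most $k$ blocks meet $\partial A$.

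The first step is to locate $J$. Using that $A\cup\partial A$ is connected together with the ring-like condition, I would show that $P$ admits at most one \emph{gap} (maximal run of empty blocks) of cyclic length $\ge t$: two such gaps would split $P$, and a path in $A\cup\partial A$ joining blocks on opposite sides would at some edge have to jump over $\ge t$ blocks, contradicting $\tilde{d}\le t$. The hypothesis $|A|\le\frac12|V|$ guarantees that the long gap is nonempty (in the two-ended case it is simply the infinite complement of the finite span of $A$), so deleting it leaves a proper interval $J$ of $\vec{\sigma}$ containing every block of $P$, in particular every block meeting $A$; hence $A\subseteq Q=\bigsqcup_{B\in J}B$, and every remaining (internal) gap of $P$ now has fewer than $t$ blocks.

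The crux is a \emph{fringe lemma} powered by cohesiveness: every non-full block of $J$ lies within cyclic distance $\sim st^2$ of a block meeting $\partial A$. For a mixed block one takes $u\in A$ and $w\in C$ in it, joins them by a path of length $\le 2st$ (by $2st$-cohesiveness), and notes that since $\partial A$ separates $A$ from $C$ this path meets $\partial A$ at a vertex within $st$ edges of $u$ or $w$, hence in a block within cyclic distance $\le st^2$; an internal empty block lies in a gap of length $<t$ whose flanking block either meets $\partial A$ or meets $A$ while being adjacent to $C$, so the same argument places it within $st^2+t$ of $\partial A$. Thus all non-full blocks of $J$ lie in the union $\partial^{*}A$ of the $\le k$ cyclic intervals of radius $O(st^2)$ about the boundary blocks, giving $|\partial^{*}A|\le k\bigl(2st^2+O(t)\bigr)$. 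Finally $|Q\sm A|=\sum_{B\in J}|B\sm A|$ receives nothing from full blocks and at most $s$ from each block of $\partial^{*}A$, so $|Q\sm A|\le s|\partial^{*}A|\le 2s^2t^2k+2stk$ once the radii and interval lengths are tracked exactly.

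The main obstacle is precisely the fringe lemma, the sole point at which $2st$-cohesiveness enters: it converts the metric fact ``every $A$-to-$C$ path crosses $\partial A$'' into the combinatorial fact that non-full blocks cluster within a bounded cyclic neighbourhood of the $k$ boundary blocks, after which the estimate is bookkeeping on the circuit. Two delicate points remain: tracking the cyclic radius sharply enough (radius $\sim st^2$, not a cruder multiple) so that the leading term is exactly $2s^2t^2k$; and the degenerate finite case in which $P$ meets every block and no long gap exists. In that case the fringe lemma forces all but $O(st^2k)$ blocks to be full, whence $|A|\le\frac12|V|$ bounds the total number of blocks and $|V\sm A|$ itself already lies below the claimed bound, so one may take $J$ to be the entire cyclic order.
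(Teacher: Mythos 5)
First, a point of orientation: the paper does not prove this lemma at all --- it is quoted from DeVos and Mohar \cite{onepr} (their Lemma 5.1) and used as a black box in the proof of Lemma \ref{newcase2} --- so there is no in-paper proof to compare against, and your proposal must be judged as a free-standing reconstruction. As such it is essentially correct, and it almost certainly runs along the same lines as the source, as the shape of the bound $2stk(st+1)$ suggests. All three ingredients are sound. (a) Connectivity of $A\cup\partial A$ plus the ring-like condition (an edge moves blocks by cyclic distance at most $t$, so no edge can cross a run of $t$ or more empty blocks, since that would require a jump of cyclic distance at least $t+1$) does yield at most one gap of length $\geqslant t$, hence an interval $J$ containing every block meeting $A$, with every internal gap of length $<t$. (b) Your fringe lemma is exactly where $2st$-cohesiveness enters, and it is correct: for $u\in A$ and $w\in V\smallsetminus(A\cup\partial A)$ in the same or adjacent blocks, a connecting path of length $\leqslant 2st$ must pass through $\partial A$, at a vertex within $st$ edges of one endpoint, hence in a block at cyclic distance $\leqslant st\cdot t=st^2$ from $\beta(u)$ or $\beta(w)$. (c) The bookkeeping you defer does close: mixed blocks lie within cyclic distance $st^2$ of one of the $\leqslant k$ boundary blocks, internal empty blocks within $st^2+t-1$ (through a flanking block of the short gap), so at most $k\bigl(2(st^2+t-1)+1\bigr)=k(2st^2+2t-1)$ blocks of $J$ fail to be contained in $A$, and at $\leqslant s$ vertices each this gives $|Q\smallsetminus A|\leqslant 2s^2t^2k+2stk-sk$, inside the stated bound. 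Two small inaccuracies, neither fatal: the hypothesis $|A|\leqslant\frac{1}{2}|V|$ does not ``guarantee that the long gap is nonempty'' --- on a circuit a configuration with only short gaps can occur regardless of that hypothesis, and this is precisely your degenerate case, which you handle anyway; and in that degenerate case your sentence about $|A|\leqslant\frac{1}{2}|V|$ bounding the number of blocks is garbled --- what actually finishes it is your own fringe count applied with $J$ the whole circuit, which bounds $|V\smallsetminus A|$ by $sk(2st^2+2t-1)$ directly, with no use of the cardinality hypothesis at all.
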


Finally, let $X=(V,E)$ be the Cayley graph of a finitely generated infinite group $G$ with respect to a finite generating set $S$. Then $X$ is a connected locally finite, infinite vertex transitive graph. As the number of ends is a quasi isometry invariant, the number of ends of the group $G$, $e(G)$, is the number of ends of its Cayley graph with respect to any finite generating set, so $e(G)=e(X)$.
Furthermore, the following result of Wilkie and Van Den Dries in \cite{wvdd} describes explicitly the growth of locally finite Cayley graphs.
\begin{thm}[Wilkie, Van Den Dries]\label{wvd}
Let $X$ be a Cayley graph of a finitely generated group. Then only one of the following holds:
\begin{enumerate}
\item There exist $\alpha$, $\beta\geqslant 0$ so that for any $n\in\mathbb{N}$, $b(n)\leqslant \alpha n+\beta$
\item For any $n\in\mathbb{N}$, $b(n)\geqslant \frac{1}{2}(n+1)(n+2)$
\end{enumerate}
\end{thm}
To complete our reference in expansion in Cayley graphs we recall the Varopoulos isoperimetric inequality \cite{var}:
\begin{dlh}
Let $X=(V,E)$ be a Cayley graph of a finitely generated group. If $A$ is a non empty, finite subset of $V$ and $m$ is the minimum positive integer so that $b(m)\geqslant 2|A|$, then
$$|A|\leqslant 2m|\partial A|$$
\end{dlh}
 
\section{A counterexample}\label{counter}
Motivated by the work of Babai and Szegety in \cite{BabSz}, DeVos and Mohar made in \cite{one} the following conjecture concerning local expansion in vertex transitive graphs.

\begin{conjecture}[DeVos, Mohar]\label{conj}
There exists a fixed constant $c>0$, so that in every locally finite vertex transitive graph $X=(V,E)$, we have
$$\frac{|\partial A|}{|A|}\geqslant \frac{c}{depth(A)}$$
whenever $A\subset V$ is finite and $\displaystyle2|A|\leqslant |V|$.
\end{conjecture}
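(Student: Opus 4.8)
The plan is to derive the inequality from the tension between what depth forces to sit inside $A$ and what the growth function $b(\cdot)$ allows. Fix a finite $A$ with $2|A|\leqslant |V|$, and write $d=depth(A)$ and $k=|\partial A|$. The first observation is that depth plants a solid ball in $A$: choosing $u_0\in A$ with $d(u_0,V\smallsetminus A)=d$ forces $B(u_0,d-1)\subseteq A$, so $|A|\geqslant b(d-1)$. Dually, every $u\in A$ lies within distance $d$ of a boundary vertex (follow a shortest path from $u$ to $V\smallsetminus A$ to its first exit point), so $A\subseteq\bigcup_{x\in\partial A}B(x,d)$ and hence $|A|\leqslant k\,b(d)$. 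Since the target inequality $\frac{k}{|A|}\geqslant\frac{c}{d}$ is equivalent to $|A|\leqslant\frac{d}{c}\,k$, the whole problem is to sharpen the crude covering bound $k\,b(d)$ by the factor $b(d)/d$, i.e. to show that a deep set cannot genuinely fill the balls of radius $d$ that cover it.

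First I would dispose of the regime where $A$ is roughly as wide as it is deep. Here the result of Babai and Szegedy in \cite{BabSz} applies directly: $\frac{|\partial A|}{|A|}\geqslant\frac{2}{2\,diam(A)+1}$, so whenever $diam(A)\leqslant\lambda\,depth(A)$ for a fixed $\lambda$, the conclusion follows with $c=2/(2\lambda+1)$. The entire difficulty therefore collapses onto sets whose diameter is far larger than their depth, that is, long and thin regions.

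For those I would localize. The idea is to cover $A$ by overlapping pieces $A_1,\dots,A_N$, each contained in a ball of radius $O(d)$, arranged so that each boundary vertex is charged to only boundedly many pieces. Each $A_i$ sits inside a ball of radius $O(d)$, so the minimal scale $m_i$ with $b(m_i)\geqslant 2|A_i|$ is itself $O(d)$; applying the Varopoulos isoperimetric inequality piecewise then gives $|A_i|\leqslant 2m_i|\partial A_i|=O(d)\,|\partial A_i|$, and summing with bounded overlap returns $|A|\leqslant O(d)\,k$, as required. The growth dichotomy of Wilkie and Van Den Dries (Theorem \ref{wvd}) is what keeps the per-piece scale honest: within an $O(d)$-ball the volume is genuinely polynomial, so $b(O(d))$ is comparable to $b(d)$ and the local scale cannot secretly blow up.

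The main obstacle is exactly this long-thin regime and the control of overlaps inside it. A thin annular or tubular set can have enormous diameter while its boundary winds repeatedly through the same region, so naive charging of boundary vertices to local pieces double-counts catastrophically, and the summation above breaks. To rescue it one must import global structure: if $A$ is far more spread out than its boundary allows locally, then $X$ should itself be forced into a two-ended, ring-like shape, where Theorem \ref{theorem3.1} supplies an $(s,t)$-ring-like, $2st$-cohesive cyclic system and Lemma \ref{lemma5.1} confines $A$ to an interval $Q$ with $|Q\smallsetminus A|$ bounded in terms of $k$ alone, forcing $|A|$ to grow only linearly in the length of the interval and hence in $d\cdot k$. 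Reconciling the polynomial-growth local pieces with this ring-like confinement — ruling out a set that is simultaneously deep, globally spread, and yet triggers neither the Babai--Szegedy bound nor the ring structure — is the crux, and it is precisely here that the existence of a single uniform constant $c$, independent of $X$ and $A$, is at stake.
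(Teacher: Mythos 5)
This statement cannot be proved, because it is false, and the paper quotes it only in order to refute it: Proposition \ref{propcounter} and Corollary \ref{corcounter} are a counterexample, not a proof. The paper takes a $ki\times ki$ square in $\mathbb{Z}^2$ and punctures it with a sparse grid of $(k-1)^2$ single vertices spaced $i$ apart, $A_i(k)=X_i(k)\smallsetminus Y_i(k)$. Each puncture costs one boundary vertex but caps the depth at roughly $i/2$ everywhere, while the area stays of order $(ki)^2$; with $|A_i(k)|=(ki)^2-(k-1)^2$ and $|\partial A_i(k)|=(k-1)^2+4(ki+1)$ one gets
$$\frac{|\partial A_i(k)|\, depth(A_i(k))}{|A_i(k)|}\leqslant \frac{[(k-1)^2+4(ki+1)]\,i}{2[(ki)^2-(k-1)^2]}\longrightarrow 0 \quad (i,k\to\infty),$$
so no uniform $c>0$ exists, even for the single graph $\mathbb{Z}^2$ (and, by wrapping into $\mathbb{Z}_n\oplus\mathbb{Z}_n$, not for finite graphs either). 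Your proposal should have been tested against exactly this kind of set: it is fat, not long and thin ($diam\approx 2ki$ while $depth\approx i/2$, so Babai--Szegedy gives only $c/k$, vacuous), it lives in the one-ended graph $\mathbb{Z}^2$, so the two-ended ring-like machinery of Theorem \ref{theorem3.1} and Lemma \ref{lemma5.1} can never be triggered, and yet it violates the conjectured inequality. Your trichotomy (wide sets via Babai--Szegedy, thin sets via ring structure, everything else via local Varopoulos) simply has no branch that handles a deep set riddled with sparse punctures.

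The concrete technical failure is in your piecewise Varopoulos step. When you cut $A$ into pieces $A_1,\dots,A_N$ of diameter $O(d)$, the boundary $\partial A_j$ of a piece consists mostly of vertices \emph{interior} to $A$ (where one piece meets the next), not of vertices of $\partial A$; so $\sum_j|\partial A_j|$ is not $O(k)$ with bounded overlap --- it is of order $|A|/d$. On the punctured square: pieces of radius $O(i)$ have boundaries of size $\Theta(i)$ lying inside $A$, there are $\Theta(k^2)$ of them, and summing your estimate $|A_j|\leqslant O(d)|\partial A_j|$ yields only $|A|\leqslant O(d)\cdot|A|/d$, which is circular. Varopoulos applied to $A$ itself gives $|A|\leqslant 2m|\partial A|$ with $b(m)\geqslant 2|A|$, and in $\mathbb{Z}^2$ that forces $m\approx\sqrt{|A|}\approx ki$, not $O(depth(A))=O(i)$: the scale in Varopoulos is governed by the total volume of $A$, and depth gives no handle on it once the set is allowed interior holes. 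That mismatch between $\sqrt{|A|}$ and $depth(A)$ is precisely the degree of freedom the paper's counterexample exploits, and no refinement of your covering scheme can close it.
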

We will disprove this conjecture by constructing a counter example. Specifically, we will first show that for the graph of $\mathbb{Z}^2$ there exists no such constant.
\begin{propp}
Let $c>0$. There exists a finite subset, $A_c$, of $\mathbb{Z}^2$ so that in the Cayley graph of $\mathbb{Z}^2$:
$$\frac{|\partial A_c|}{|A_c|}< \frac{c}{depth(A_c)}$$
\end{propp}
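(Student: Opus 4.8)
The plan is to exploit the fact that the quantity $\frac{|\partial A|}{|A|}\,depth(A)$ is forced to stay bounded below only when the boundary is essentially one-dimensional, as for a disk or an annulus, where eroding the set keeps the perimeter comparable to $|A|/depth(A)$ and the product near $1$. I will instead arrange for the boundary to consist of many isolated vertices spread uniformly through a solid region, so that each boundary vertex is cheap (it contributes a single point to $\partial A$) yet controls an entire cell whose radius is comparable to the depth. Concretely, fix an even integer $\rho\geqslant 2$ and an integer $m$, put $N=\rho m$, and set
$$A=\bigl(\{0,1,\dots,N\}^2\bigr)\smallsetminus H,\qquad H=\{(\rho i,\rho j)\mid 1\leqslant i,j\leqslant m-1\}.$$
Thus $A$ is a large square of side $N$ from which a sparse, evenly spaced family of single interior vertices (``holes'') has been deleted. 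One checks immediately that $A$, and hence $A\cup\partial A$, is connected.

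Next I would compute the three relevant quantities. Since the holes are interior and pairwise at $\ell^1$-distance $\rho\geqslant 2$, each is isolated and therefore lies in $\partial A$ while contributing exactly one vertex to it; adding the outer ring of the square gives $|\partial A|=4(N+1)+(m-1)^2$. The deleted vertices are negligible in the size count, so $|A|=(N+1)^2-(m-1)^2$. The crucial computation is the depth: because the holes form the lattice $\rho\mathbb{Z}^2$, whose covering radius in the $\ell^1$ metric equals $\rho$ (attained at the cell centres $(\rho i+\tfrac{\rho}{2},\rho j+\tfrac{\rho}{2})$), every vertex of $A$ lies within distance $\rho$ of a hole, while the central cell centres realise exactly this distance; hence $depth(A)=\rho$.

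Combining these, the ratio of interest becomes
$$\frac{|\partial A|}{|A|}\,depth(A)=\frac{\bigl(4(N+1)+(m-1)^2\bigr)\,\rho}{(N+1)^2-(m-1)^2}.$$
With $\rho$ fixed and $m\to\infty$, the square's own perimeter $4(N+1)=4\rho m+4$ is swamped by the hole contribution $(m-1)^2$, and the right-hand side tends to $\frac{\rho\,m^2}{\rho^2 m^2}=\frac{1}{\rho}$. Therefore, given $c>0$, I would first pick an even $\rho$ with $\frac{1}{\rho}<c$ (for instance $\rho=2\lceil 1/c\rceil$, so that $\frac1\rho\leqslant\frac c2$) and then take $m$ large enough that the displayed quantity drops below $c$; the resulting set $A_c$ witnesses the proposition.

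The heart of the matter, and the step I expect to require the most care, is the balance between the two contributions to $\partial A$: the construction succeeds precisely because the area-efficient part of the boundary is point-like, so that $|\partial A|\approx|H|\approx|A|/\rho^2$ while $depth(A)\approx\rho$, driving the product down to $\approx 1/\rho$. Making this rigorous reduces to (a) verifying that each isolated hole adds only a single vertex to $\partial A$, and (b) pinning down the $\ell^1$ covering radius of $\rho\mathbb{Z}^2$ so that $depth(A)=\rho$ exactly; the final inequality is then an elementary estimate in $N$ and $\rho$, valid once one insists that $N\gg\rho^2$ so that the hole term dominates the square's perimeter.
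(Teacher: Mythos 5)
Your construction is exactly the paper's: the paper takes the square $\{0,\dots,ki\}^2$ minus the interior grid $\{(mi,ni)\mid 1\leqslant m,n\leqslant k-1\}$, which is your $A$ with $\rho=i$ and $m=k$, and computes the same boundary count $4(ki+1)+(k-1)^2$ and the same size; the only (immaterial) difference is that the paper sends both parameters to infinity, whereas you fix the spacing $\rho\approx 2/c$ first and let the square grow, which works equally well. Incidentally, your depth computation is the more careful one — the $\ell^1$ covering radius of $\rho\mathbb{Z}^2$ is indeed $\rho$, whereas the paper asserts $depth(A_i(k))\leqslant i/2$ (the $\ell^\infty$ value) — but this factor of $2$ affects neither argument.
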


\begin{proof}
For any $i$, $k\in\mathbb{Z}$ with $i,k>1$, let:
$$\begin{array}{r c l}
X_i(k)&=& \{(m,n)\in\mathbb{Z}^2\mid 0\leqslant m\leqslant ki \text{ , } 0\leqslant n\leqslant ki\}\\
Y_i(k) &=& \{(mi,ni)\in\mathbb{Z}^2\mid 1\leqslant m\leqslant k-1\text{ , }1\leqslant n\leqslant k-1\}\\
\end{array}$$
and
$$A_i(k) = X_i(k) \smallsetminus Y_i(k)$$
\begin{figure}[ht]
    \centering
        \includegraphics[scale=0.5]{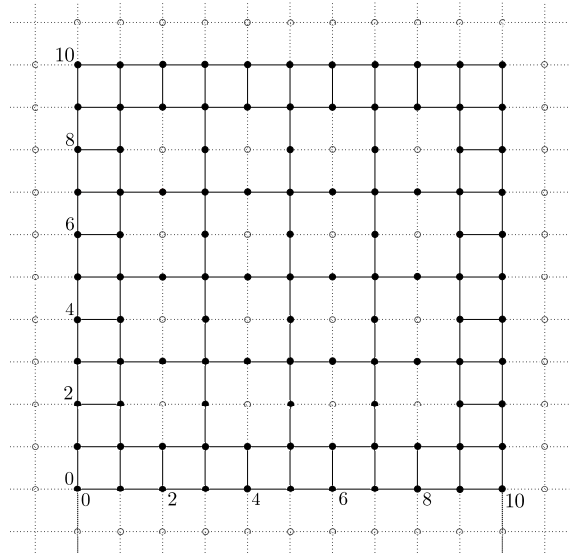}
       \caption{The set $A_2(5)$ in the graph of $\mathbb{Z}^2$}
\label{Fig:1}
\end{figure}\\

It is easy to check that, for any $i,k>1$, the set $A_i(k)$ is a finite subset of the vertices of the graph of $\mathbb{Z}^2$ and it satisfies the following:
\begin{enumerate}
\item $depth(A_i(k))\leqslant \frac{i}{2}$\label{oone}
\item $|A_i(k)| = (ki)^2 - (k-1)^2$\label{two}
\item $|\partial A_i(k)| = (k-1)^2 + 4(ki+1)$\label{three}
\end{enumerate}
Let 
$$a_i(k)=\displaystyle{\frac{|\partial A_i(k)|\cdot depth(A_i(k))}{|A_i(k)|}}$$
Then, by (\ref{oone}),(\ref{two}),(\ref{three}) we get that for any $i,k>1$:
$$
\begin{array}{r c l}
a_i(k)
 & \leqslant & \displaystyle{\frac{[(k-1)^2+4(ki+1)]i}{2[(ki)^2-(k-1)^2]}}\Rightarrow\\
  & & \\
a_i(k) & \leqslant & \displaystyle{\frac{ik^2+(4i^2-2i)k+5i}{2(i^2-1)k^2+4k-2}}\\
\end{array}
$$
Therefore,
$$\lim\limits_{i,k\to\infty}a_i(k)=0$$
Thus, for any $c>0$ there exist $i_c,k_c\in\mathbb{N}$ so that $a_{i_c}(k_c)<c$, i.e. the set $A_c=A_{i_c}(k_c)$ satisfies the inequality:
$$\displaystyle{\frac{|\partial A_c|\cdot depth(A_c)}{|A_c|}}<c$$
\end{proof}

The following corollary shows that there exists no such universal constant even if we restrict the conjecture to finite graphs.
\begin{cor}\label{corcounter}
Let $c>0$. There exists a finite graph $X=(V,E)$ and $A\subset V$, with $\displaystyle0<|A|\leqslant \frac{1}{2}|V|$ so that
$$\frac{|\partial A|}{|A|}< \frac{c}{depth(A)}$$
\end{cor}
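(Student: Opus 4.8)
The plan is to transplant the set $A_c$ produced in Proposition~\ref{propcounter} into a sufficiently large discrete torus. Let $A_c=A_{i_c}(k_c)$ be the subset of $\mathbb{Z}^2$ furnished by that proposition, so that $\frac{|\partial A_c|\cdot depth(A_c)}{|A_c|}<c$, and write $L=k_ci_c$ for the side length of the square $X_{i_c}(k_c)\supseteq A_c$; recall also from the proof above that $depth(A_c)\leqslant\frac{i_c}{2}$. For an integer $n$ to be fixed, I would take $X=(V,E)$ to be the Cayley graph of $\mathbb{Z}_n\times\mathbb{Z}_n$ with respect to the standard generators $\{(\pm1,0),(0,\pm1)\}$; this is a finite, $4$-regular, vertex transitive graph with $|V|=n^2$. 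Writing $\pi\colon\mathbb{Z}^2\to\mathbb{Z}_n\times\mathbb{Z}_n$ for the natural quotient homomorphism, the goal is to exhibit a copy $A=\pi(A_c+(1,1))$ of $A_c$ inside $X$ for which $|A|$, $|\partial A|$ and $depth(A)$ match (or improve on) the corresponding quantities in $\mathbb{Z}^2$, while satisfying $0<|A|\leqslant\frac12|V|$.

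First I would fix the size of the torus. Since $A_c\subseteq\{0,\dots,L\}^2$, the translate $A_c+(1,1)$ lies in $\{1,\dots,L+1\}^2$, and I would set $n=2L$. Because $L=k_ci_c\geqslant4$, this choice yields both $L+1\leqslant n-2$, so that $A_c+(1,1)$ sits well inside the interior square $\{1,\dots,n-2\}^2$ of the fundamental domain, and $n^2=4L^2\geqslant2L^2>2|A_c|$, which supplies the volume bound $|A|\leqslant\frac12|V|$. As $\pi$ is injective on the fundamental domain, $|A|=|A_c|$, and $|A|>0$ since $(k_ci_c)^2-(k_c-1)^2>0$.

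Next I would check that the relevant local combinatorics are preserved. Because every point of $A_c+(1,1)$ has both coordinates in $\{1,\dots,n-2\}$, all of its torus neighbours have coordinates in $\{0,\dots,n-1\}$ and no adjacency wraps around the torus; hence the set of neighbours of $A$ in $X$ is exactly the $\pi$-image of the $\mathbb{Z}^2$-boundary of $A_c+(1,1)$, and $\pi$ remains injective there, giving $|\partial A|=|\partial A_c|$. For the depth I would argue two inequalities. Since $\pi$ is a graph homomorphism it does not increase distances, so for $u\in A$ and a nearest complement point $w\in\mathbb{Z}^2\smallsetminus(A_c+(1,1))$ one has $d_X(\pi(u),\pi(w))\leqslant d_{\mathbb{Z}^2}(u,w)\leqslant depth(A_c)$, whence $depth(A)\leqslant depth(A_c)\leqslant\frac{i_c}{2}$; and since interior points of $A$ have all their (non-wrapping) neighbours inside $A$, we get $depth(A)\geqslant1>0$, so the ratio is well defined. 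Combining, $\frac{|\partial A|\cdot depth(A)}{|A|}\leqslant\frac{|\partial A_c|\cdot depth(A_c)}{|A_c|}<c$, which with $0<|A|\leqslant\frac12|V|$ is exactly the assertion.

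The one point that must be handled carefully is the wrap-around: in a torus opposite sides of the fundamental domain are adjacent, so if $A$ were placed touching that boundary the identification would create spurious boundary vertices (inflating $|\partial A|$) and possible shortcuts (shrinking distances uncontrollably). Translating $A$ into the interior and taking $n=2L$ is precisely what eliminates this, making a neighbourhood of $A\cup\partial A$ in $X$ isometric to the corresponding region of $\mathbb{Z}^2$; once this is secured, the remainder is bookkeeping with the explicit values of $|A_c|$, $|\partial A_c|$ and $depth(A_c)$ already recorded in the proof of Proposition~\ref{propcounter}.
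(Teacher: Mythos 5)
Your proposal is correct and takes essentially the same approach as the paper: the paper likewise transplants the set $A_i(k)$ of Proposition \ref{propcounter} into the Cayley graph of $\mathbb{Z}_n\bigoplus\mathbb{Z}_n$ (choosing $n=3ki+1$ rather than your $n=2k_ci_c$) so that $|A|$, $|\partial A|$ and $depth(A)$ are unchanged and $2|A|\leqslant|V|$. Your write-up simply makes explicit the injectivity, wrap-around and depth-comparison checks that the paper's two-line proof leaves to the reader.
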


\begin{proof}
From Proposition \ref{propcounter}, there exists a finite set $A=A_{i}(k)\subset \mathbb{Z}^2$ such that $\displaystyle\frac{|\partial A|}{|A|}< \frac{c}{depth(A)}$.
Let $X=(V,E)$ be the Cayley graph of $\mathbb{Z}_{n}\bigoplus \mathbb{Z}_{n}$, for $n=3ki+1$.
\begin{figure}[ht]
    \centering
        \includegraphics[scale=1]{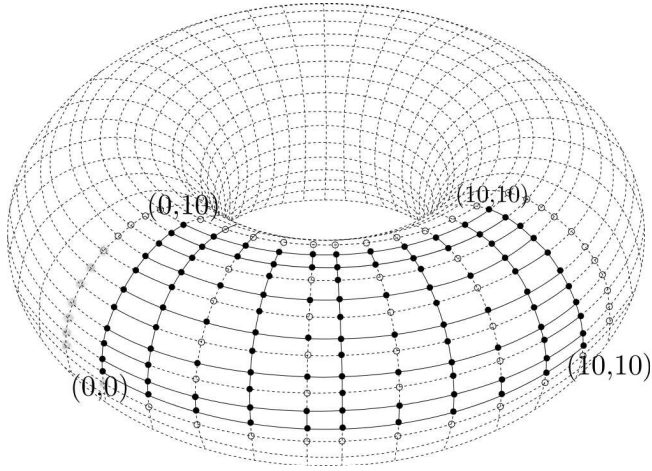}
       \caption{The set $A_2(5)$ in the graph of $\mathbb{Z}_{31}\oplus\mathbb{Z}_{31}$}
\label{Fig:2}
\end{figure}\\
Then $X$ is a finite vertex transitive graph, and $A\subset V$ with $\displaystyle0<|A|\leqslant \frac{1}{2}|V|$ and $\displaystyle\frac{|\partial A|}{|A|}< \frac{c}{depth(A)}$.
\end{proof}
 
\section{A rough structure theorem for infinite Cayley graphs}\label{imp}
In \cite {onepr}, DeVos and Mohar prove the following theorem for small separations in vertex transitive graphs:
\begin{thm}[DeVos, Mohar]
Let $X=(V,E)$ be a vertex transitive graph, and $A\subset V$ be finite and non empty set with $|A|\leqslant\frac{1}{2}|V|$ such that $A\cup\partial A$ is connected. Set $k=|\partial A|$ and assume that $diam(X)\geqslant 31(k+1)^2$. Then one of the following holds:
\begin{enumerate}[(i)]
\item $depth(A)\leqslant k$ and $|A|\leqslant 2k^3+k^2$\label{(i)}
\item There exist integers $s$, $t$ with $st\leqslant \frac{k}{2}$ and a cyclic system $\vec{\sigma}$ on $X$ so that $X$ is $(s,t)$-ring-like and there exists an interval $J$ of $\vec{\sigma}$ so that the set $Q=\bigcup\limits_{B\in J}B$ satisfies $A\subset Q$ and $|Q\smallsetminus A|\leqslant\frac{1}{2}k^3+k^2$.
\end{enumerate}
\end{thm}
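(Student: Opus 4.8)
The plan is to organize the argument around the growth dichotomy of Wilkie and Van Den Dries (Theorem \ref{wvd}): since $G$ is infinite, either $X$ has linear growth, $b(n)\leqslant\alpha n+\beta$, or it grows at least quadratically, $b(n)\geqslant\frac12(n+1)(n+2)$ for every $n$. I will show that the quadratic alternative forces conclusion (i), while the linear alternative delivers the ring-like structure of conclusion (ii), and that the threshold $|A|=16k^2$ is exactly what separates the two regimes.

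Suppose first that $X$ grows at least quadratically. Let $m$ be the least positive integer with $b(m)\geqslant 2|A|$; this exists because $X$ is infinite. The Varopoulos inequality gives $|A|\leqslant 2mk$, while minimality of $m$ gives $b(m-1)<2|A|$ and the quadratic bound gives $b(m-1)\geqslant\frac12 m(m+1)>\frac12 m^2$. Hence $m^2<4|A|$, so $m<2\sqrt{|A|}$, and substituting into $|A|\leqslant 2mk$ yields $|A|<4k\sqrt{|A|}$, that is $|A|<16k^2$. For the depth bound, choose $u\in A$ attaining $d:=depth(A)=d(u,V\smallsetminus A)$ (a maximum, since $A$ is finite); then every vertex within distance $d-1$ of $u$ lies in $A$, so the ball of radius $d-1$ about $u$ is contained in $A$ and $b(d-1)\leqslant|A|$. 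The quadratic bound now gives $\frac12 d^2<b(d-1)\leqslant|A|<16k^2$, whence $d^2<32k^2$ and $depth(A)<4\sqrt2\,k$. This is conclusion (i).

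Suppose instead that $X$ has linear growth. By the characterization of Imrich and Seifter recalled in the preliminaries, linear growth is equivalent to $e(X)=2$, so $X$ is a connected vertex transitive graph with two ends. Theorem \ref{theorem3.1} then supplies positive integers $s,t$ and a cyclic system $\vec{\sigma}$ for which $X$ is $(s,t)$-ring-like and $2st$-cohesive. Because $X$ is infinite and $A$ is finite, the hypothesis $|A|\leqslant\frac12|V|$ of Lemma \ref{lemma5.1} is automatic, and $A\cup\partial A$ is connected by assumption; applying that lemma produces an interval $J$ of $\vec{\sigma}$ with $A\subset Q=\bigsqcup_{B\in J}B$ and $|Q\smallsetminus A|\leqslant 2s^2t^2k+2stk$. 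This is precisely conclusion (ii).

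It remains to reconcile the two analyses with the size dichotomy in the statement. The computation in the quadratic case shows that at-least-quadratic growth forces $|A|<16k^2$; contrapositively, $|A|>16k^2$ can occur only under linear growth, where the preceding paragraph yields (ii), while whenever $X$ grows at least quadratically we land in conclusion (i) with $|A|\leqslant 16k^2$. I expect the only delicate point to be the quadratic case: the whole argument hinges on coupling the Varopoulos inequality with the sharp quadratic lower bound $b(n)\geqslant\frac12(n+1)(n+2)$, and on the elementary observation that a set of depth $d$ must contain a ball of radius $d-1$, which is what converts the growth estimate into an upper bound on $depth(A)$.
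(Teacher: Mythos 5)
Your argument does not prove the theorem as stated; it proves the paper's Theorem \ref{impr}, which is a genuinely different statement. Two concrete problems. First, the statement concerns an arbitrary vertex transitive graph $X$ --- possibly finite (subject only to $diam(X)\geqslant 31(k+1)^2$), and in any case not assumed to be a Cayley graph --- while your very first step invokes the Wilkie--Van Den Dries dichotomy (Theorem \ref{wvd}), which applies only to Cayley graphs of finitely generated groups. For a finite vertex transitive graph of large diameter (say a long cycle, which the statement must and does cover via alternative (ii)), the dichotomy ``linear growth or $b(n)\geqslant\frac12(n+1)(n+2)$ for all $n$'' is false, and the ends/growth machinery you build on (Imrich--Seifter, Theorem \ref{theorem3.1}) is unavailable. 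This is not a repairable detail: the entire skeleton of your proof presupposes hypotheses that the statement does not grant.

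Second, even on the subclass where your argument does run (Cayley graphs of finitely generated infinite groups), the conclusions you reach are not the conclusions claimed. In the quadratic-growth case you obtain $|A|<16k^2$ and $depth(A)<4\sqrt2\,k$; conclusion (i) of the statement requires $depth(A)\leqslant k$, and $4\sqrt2\,k\approx 5.66\,k$ does not imply that (nor does $16k^2$ imply $|A|\leqslant 2k^3+k^2$ when $k\leqslant 7$). In the linear-growth case you invoke Theorem \ref{theorem3.1} and Lemma \ref{lemma5.1}, which produce \emph{some} integers $s,t$ with no quantitative control, whereas conclusion (ii) requires $st\leqslant\frac{k}{2}$ and the bound $|Q\smallsetminus A|\leqslant\frac12 k^3+k^2$; your bound $2s^2t^2k+2stk$ collapses to that one only if $st\leqslant\frac{k}{2}$ is already known, and nothing in your argument gives it. Tying the block size and jump of the cyclic system to $|\partial A|$ is precisely the hard content of DeVos and Mohar's proof in \cite{onepr}. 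What you have written is essentially the paper's own proof of Theorem \ref{impr} (Lemmas \ref{case1} and \ref{newcase2} combined): correct for that theorem, but that theorem deliberately trades away the quantitative control ($depth(A)\leqslant k$, $st\leqslant\frac{k}{2}$, the cubic bound on $|Q\smallsetminus A|$) and the generality (vertex transitive, possibly finite) that the present statement asserts.
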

Moreover, they conjecture that the theorem should hold with a bound of the form $ck^2$ instead of $2k^3(1+o(1))$ in (\ref{(i)}). We consider the case where $X$ is the Cayley graph of a finitely generated infinite group and prove the following structure theorem with an improved bound for (\ref{(i)}) but without a bound on the ring like structure of $X$.
\begin{thrmimpr}
Let $X=(V,E)$ be a Cayley graph of a finitely generated infinite group $G$ with respect to a finite generating set, let $A\subset V$ finite such that $A\cup\partial A$ is connected and set $|\partial A|=k$. Then one of the following holds:
\begin{enumerate}[(i)]
\item $|A|\leqslant 16k^2$ and $depth(A)<4\sqrt2k$.
\item $|A|>16k^2$ and there exist positive integers $s,t$ and a cyclic system $\vec{\sigma}$ on $X$ so that $X$ is $(s,t)$-ring-like. Moreover there exists an interval, $J$, of $\vec{\sigma}$ so that the set $Q=\dsp\bigsqcup_{B\in J}B$ satisfies $A\subset Q$ and $|Q\smallsetminus A|\leqslant 2s^2t^2k+2stk$.
\end{enumerate}
\end{thrmimpr}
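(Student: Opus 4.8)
The plan is to split the argument according to the Wilkie--Van Den Dries growth dichotomy (Theorem \ref{wvd}), since the two alternatives (i) and (ii) correspond precisely to the two possible growth types of $X$. Before splitting I would record two elementary facts. First, if $u\in A$ realizes $d(u,V\smallsetminus A)=depth(A)=:D$, then every vertex at distance at most $D-1$ from $u$ lies in $A$, so the ball $B(u,D-1)$ is contained in $A$ and hence $b(D-1)\leqslant|A|$ (note $D\geqslant 1$, so this is meaningful). Second, I would keep the Varopoulos inequality in the form $|A|\leqslant 2mk$, where $m$ is the least positive integer with $b(m)\geqslant 2|A|$.

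Suppose first that $X$ has at least quadratic growth, that is $b(n)\geqslant\frac12(n+1)(n+2)$ for all $n$. By minimality of $m$ one has $b(m-1)<2|A|$ (with the convention $b(0)=1$ handling $m=1$), and the quadratic lower bound gives $\frac12 m(m+1)\leqslant b(m-1)<2|A|$, whence $m<2\sqrt{|A|}$. Feeding this into Varopoulos yields $|A|\leqslant 2mk<4k\sqrt{|A|}$, so $\sqrt{|A|}<4k$ and therefore $|A|<16k^2$. Applying the same quadratic bound to $b(D-1)\leqslant|A|<16k^2$ gives $\frac12 D^2<\frac12 D(D+1)\leqslant b(D-1)<16k^2$, so $D^2<32k^2$ and $depth(A)=D<4\sqrt2\,k$. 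This is exactly conclusion (i).

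Now suppose $X$ falls into the first alternative of Theorem \ref{wvd}, i.e. $b(n)\leqslant\alpha n+\beta$, so that $X$ has linear growth. By the Imrich--Seifter equivalence recorded in the preliminaries, $X$ then has exactly two ends, so Theorem \ref{theorem3.1} supplies integers $s,t$ and a cyclic system $\vec\sigma$ making $X$ both $(s,t)$-ring-like and $2st$-cohesive. Since $X$ is infinite the hypothesis $|A|\leqslant\frac12|V|$ of Lemma \ref{lemma5.1} holds automatically, and that lemma produces an interval $J$ of $\vec\sigma$ with $Q=\bigsqcup_{B\in J}B\supset A$ and $|Q\smallsetminus A|\leqslant 2s^2t^2k+2stk$, which is precisely the structural content of (ii).

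It remains to reconcile the growth dichotomy driving the proof with the size dichotomy stated in the theorem. The super-quadratic case was shown to force $|A|<16k^2$, so the event $|A|>16k^2$ can occur only under linear growth, where (ii) holds; and when $|A|\leqslant 16k^2$ the super-quadratic analysis supplies the depth bound of (i). The step I expect to be most delicate is exactly this matching, together with the constant-chasing in the super-quadratic case: one must combine minimality of $m$, the quadratic ball-growth estimate, and Varopoulos so that the constants close up to precisely $16k^2$ and $4\sqrt2\,k$. The key structural observation is that the depth bound genuinely depends on the quadratic lower bound for $b(n)$ and cannot be extracted from linear growth alone, so it is the growth type of $X$, rather than the raw size of $A$, that ultimately governs the case split.
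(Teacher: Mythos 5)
Your proposal follows essentially the same route as the paper. The paper organizes the proof by the number of ends rather than by growth type, but via the Imrich--Seifter equivalence and Theorem \ref{wvd} these are the same dichotomy: its Lemma \ref{case1} treats the one- or infinitely-ended (equivalently, at-least-quadratic-growth) case exactly as your first case does, combining the lower bound $b(n)\geqslant\frac12(n+1)(n+2)$ with the Varopoulos inequality to get $|A|\leqslant 16k^2$ and then $depth(A)<4\sqrt2 k$ from a ball inside $A$; and its Lemma \ref{newcase2} treats $|A|>16k^2$ by deducing two-endedness from the contrapositive of Lemma \ref{case1} and then invoking Theorem \ref{theorem3.1} and Lemma \ref{lemma5.1}, exactly as your second case does. (Your handling of the minimality of $m$, via $b(m-1)<2|A|$, is in fact slightly cleaner than the paper's, which evaluates $b$ at the possibly non-integral radius $2\sqrt{|A|}$.)

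The one caveat concerns precisely the step you flagged as delicate. Your claim that ``when $|A|\leqslant 16k^2$ the super-quadratic analysis supplies the depth bound of (i)'' is valid only when the growth actually is super-quadratic: if $X$ has linear growth (two ends) and $|A|\leqslant 16k^2$, your argument establishes neither the depth bound in (i) nor the size conjunct in (ii), so that case is left uncovered. You should know, however, that the paper's own proof has exactly the same hole, since Lemma \ref{case1} assumes one or infinitely many ends and Lemma \ref{newcase2} assumes $|A|>16k^2$, so the two-ended case with $|A|\leqslant 16k^2$ is treated nowhere. Moreover, no argument can close this gap, because the statement itself fails there: in the Cayley graph of $\mathbb{Z}$ take $A=\{1,\dots,64\}$, so that $k=|\partial A|=2$, $|A|=64=16k^2$, and $depth(A)=32>8\sqrt2=4\sqrt2 k$; both alternatives fail. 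The theorem becomes true (and your proof, like the paper's, becomes complete) if the conjunct $|A|>16k^2$ is dropped from (ii), since the ring-like conclusion holds for every two-ended $X$ and every finite $A$ with $A\cup\partial A$ connected, by Theorem \ref{theorem3.1} and Lemma \ref{lemma5.1}, irrespective of the size of $A$.
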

The proof consists of two lemmas (Lemmas \ref{case1} and \ref{newcase2}), involving the number of ends of the group $G$. 

\begin{lemma}\label{case1}
Let $X=(V,E)$ be the Cayley graph of a finitely generated group $G$ with one or infinitely many ends, with respect to a finite generating set. If $A\subset V$ is finite and $|\partial A|=k$, then
$$|A|\leqslant 16k^2\quad\text{and}\quad depth (A)<4\sqrt2k$$
\end{lemma}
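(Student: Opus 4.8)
The plan is to extract from the hypothesis on the number of ends a single growth estimate and then combine it with Varopoulos. Since $G$ has one or infinitely many ends, $e(G)\neq 2$; by the Imrich--Seifter characterization recorded in the preliminaries, $X$ therefore does \emph{not} have linear growth, so the first alternative of the Wilkie--Van Den Dries dichotomy (Theorem \ref{wvd}) must fail. Hence its second alternative holds, yielding the at-least-quadratic lower bound
$$b(n)\geqslant \frac{1}{2}(n+1)(n+2)\qquad\text{for all }n\in\mathbb{N}.$$
This one inequality is the engine of the whole argument. (Note also that $A$ is a non-empty finite subset of the infinite connected graph $X$, so $V\smallsetminus A\neq\emptyset$ and $\partial A\neq\emptyset$, i.e. $k\geqslant 1$.)

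For the bound on $|A|$ I would feed this growth estimate into the Varopoulos isoperimetric inequality. Let $m$ be the minimum positive integer with $b(m)\geqslant 2|A|$; such an $m$ exists since the balls grow without bound. Varopoulos gives $|A|\leqslant 2m|\partial A|=2mk$. To eliminate $m$, I use minimality: $b(m-1)<2|A|$, while the growth bound at $n=m-1$ reads $b(m-1)\geqslant\frac{1}{2}m(m+1)$. Chaining these with $|A|\leqslant 2mk$ gives $\frac{1}{2}m(m+1)<4mk$, i.e. $m<8k-1$, and therefore $|A|\leqslant 2mk<16k^2$. The degenerate case $m=1$ is immediate, since then $|A|\leqslant 2k\leqslant 16k^2$.

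For the depth bound I would argue geometrically and reuse the $|A|$ bound just obtained. Writing $D=depth(A)$, choose $u\in A$ with $d(u,V\smallsetminus A)=D$; then every vertex at distance at most $D-1$ from $u$ lies in $A$, so the ball of radius $D-1$ about $u$ is contained in $A$. By vertex transitivity this ball has exactly $b(D-1)$ vertices, whence $|A|\geqslant b(D-1)\geqslant\frac{1}{2}D(D+1)$. Combining with $|A|\leqslant 16k^2$ gives $\frac{1}{2}D(D+1)\leqslant 16k^2$, so that $D^2<D(D+1)\leqslant 32k^2$ and $D<4\sqrt{2}\,k$ (the strictness coming from $D\geqslant 1$ for non-empty $A$).

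I expect no serious obstacle: the only genuine content is the reduction of the end hypothesis to the quadratic growth lower bound, after which both conclusions drop out by pairing that bound with Varopoulos (for $|A|$) and with the observation that a deepest vertex carries a full ball inside $A$ (for the depth). The one point requiring care is the minimality bookkeeping for $m$ and the verification $k\geqslant 1$; these are routine.
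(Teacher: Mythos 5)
Your proposal is correct and takes essentially the same route as the paper: the ends hypothesis together with Imrich--Seifter rules out linear growth, so the Wilkie--Van Den Dries dichotomy gives $b(n)\geqslant\frac{1}{2}(n+1)(n+2)$, which combined with the Varopoulos inequality yields $|A|\leqslant 16k^2$, and the full ball of radius $depth(A)-1$ around a deepest vertex of $A$ gives $depth(A)<4\sqrt{2}k$. The only divergence is bookkeeping in eliminating $m$: the paper asserts $m\leqslant 2\sqrt{|A|}$ and substitutes into $|A|\leqslant 2mk$, whereas you use minimality via $b(m-1)<2|A|$ to get $m<8k-1$ first --- your variant is in fact slightly tidier, since it never evaluates $b$ at the possibly non-integer radius $2\sqrt{|A|}$.
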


\begin{proof}
From Theorem \ref{wvd} we have that, for any $n\in\mathbb{N}$,
$$b(n)\geqslant \frac{1}{2}(n+1)(n+2)$$
Therefore, $b(2\sqrt{|A|})>2|A|$ and if $m$ is the minimum positive integer so that $b(m)\geqslant 2|A|$, then $m\leqslant 2\sqrt{|A|}$. So, from the Varopoulos isoperimetric inequality we derive that 
$$|A|\leqslant 16k^2$$
Also, there exists $x\in A$ such that $B(x,depth(A)-1)\subset A$, hence 
$$\left.\begin{tabular}{r c l}
$b(depth(A)-1)$&$\leqslant$ &$|A|<16k^2$\\
$b(depth(A)-1)$&$>$&$\displaystyle\frac{(depth(A))^2}{2}$\\
\end{tabular}\right\} \Rightarrow depth(A)<4\sqrt2k$$
\end{proof}
\begin{lemma}\label{newcase2}
Let $X=(V,E)$ be the Cayley graph of a finitely generated infinite group $G$ with respect to a finite generating set, let $A\subset V$ finite such that $A\cup\partial A$ is connected and set $|\partial A|=k$. If $|A|>16k^2$ then there exist positive integers $s,t$ and a cyclic system, $\vec{\sigma}$, on $X$ so that $X$ is $(s,t)$-ring-like. Moreover, there exists an interval, $J$, of $\vec{\sigma}$ so that the set $Q=\dsp\bigsqcup_{B\in J}B$ satisfies $A\subset Q$ and $|Q\smallsetminus A|\leqslant 2s^2t^2k+2stk$.
\end{lemma}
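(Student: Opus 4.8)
The plan is to reduce the lemma to the two-ended case and then chain together the two results of DeVos and Mohar recalled in the preliminaries. The pivotal observation is that the hypothesis $|A|>16k^2$ is, by the contrapositive of Lemma \ref{case1}, incompatible with $G$ having one or infinitely many ends. Indeed, Lemma \ref{case1} asserts that every Cayley graph of a finitely generated group with one or infinitely many ends satisfies $|A|\leqslant 16k^2$ for every finite $A$ with $|\partial A|=k$. Since our set violates this bound, $G$ can have neither one nor infinitely many ends. As $G$ is infinite we have $e(G)\neq 0$, and since $e(G)\in\{0,1,2,\infty\}$ by the Hopf--Halin classification recalled above, the only surviving possibility is $e(G)=2$. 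Thus $X$ is a connected, two-ended, vertex transitive graph.

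Having pinned down the number of ends, I would invoke Theorem \ref{theorem3.1} to obtain positive integers $s,t$ and a cyclic system $\vec{\sigma}$ on $X$ for which $X$ is $(s,t)$-ring-like and $2st$-cohesive with respect to $\vec{\sigma}$; this already furnishes the ring-like structure claimed in the statement. To extract the interval $J$ I would then apply Lemma \ref{lemma5.1}, whose hypotheses are exactly that $X$ be $(s,t)$-ring-like and $2st$-cohesive, that $A\cup\partial A$ be connected, and that $|A|\leqslant\frac{1}{2}|V|$. The connectivity hypothesis is given, and the volume hypothesis holds trivially because $X$ is infinite while $A$ is finite, so $|V|=\infty$. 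Its conclusion is precisely the existence of an interval $J$ with $Q=\dsp\bigsqcup_{B\in J}B\supset A$ and $|Q\smallsetminus A|\leqslant 2s^2t^2k+2stk$, which is the bound we need, so the proof closes.

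The main obstacle is really confined to the first step: recognizing that the large-volume assumption forces $G$ to be two-ended. Once the number of ends is identified, the result is an essentially mechanical composition of the two borrowed statements. The one point I would verify carefully is that the cohesiveness constant produced by Theorem \ref{theorem3.1}, namely $2st$, matches exactly the $2st$-cohesiveness required as a hypothesis of Lemma \ref{lemma5.1}; it is this numerical compatibility that permits the two results to be linked with no intermediate argument, and any mismatch there would be the only place where additional work could intrude.
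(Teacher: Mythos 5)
Your proposal is correct and follows essentially the same route as the paper's own proof: deduce from Lemma \ref{case1} that $X$ is two-ended, then apply Theorem \ref{theorem3.1} followed by Lemma \ref{lemma5.1}. In fact you spell out two details the paper leaves implicit --- the Hopf--Halin classification argument pinning $e(G)=2$, and the verification that $|A|\leqslant\frac{1}{2}|V|$ holds trivially since $V$ is infinite --- so your write-up is, if anything, more complete.
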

\begin{proof}
Since $|A|>16k^2$ from Lemma \ref{case1} we get that $X$ is two ended. So, from Theorem \ref{theorem3.1}, there exist integers $s,t$ and a cyclic system $\vec{\sigma}$ so that $X$ is $(s,t)$-ring-like and $2st$-cohesive with respect to $\vec{\sigma}$. Thus, from Lemma \ref{lemma5.1}, there exists an interval $J$ of $\vec{\sigma}$ so that the set $Q=\dsp\bigsqcup_{B\in J}B$ satisfies $A\subset Q$ and $|Q\smallsetminus A|\leqslant 2s^2t^2k+2stk$.
\end{proof}

\end{document}